\theoremstyle{plain}
\newtheorem{theorem}{Theorem}
\newtheorem{proposition}{Proposition}
\newtheorem{corollary}{Corollary}
\newtheorem{lemma}{Lemma}
\def\udots{\mathinner{\mkern2mu\raise\p@\hbox{.}\mkern2mu\raise4\p@\hbox{.}
\mkern2mu\raise7\p@\vbox{\kern7\p@\hbox{.}} \mkern1mu}}
\newlength{\dotlength}
\newcommand{\uline}[1]{{\hbox to 0pt{\underline{\hphantom{{#1}}}\hss}{#1}}}
\newcommand{\angs}[1]{\left\langle #1 \right\rangle}
\newcommand\cp{\cite{Popov}}
\newcommand\gtd{\mathop{\mathrm{gtd}}}
\newcommand\rk{\mathop{\mathrm{rk}}}
\newcommand\Ker{\mathop{\mathrm{Ker}}}
\newcommand\Lie{\mathop{\mathrm{Lie}}}
\newsavebox{\sbsmallzero}
\sbox{\sbsmallzero}{$\textstyle0$}
\newsavebox{\sblargezero}
\sbox{\sblargezero}{\Huge 0}
\newlength{\ldiffzero}
\newcommand{\largezero}{\hbox to \wd\sbsmallzero{\hss\raisebox{-\ldiffzero}[0pt][0pt]{\Huge 0}\hss}}
\begin{document}

\title{Generically transitive actions on multiple flag varieties}
\author{Rostislav Devyatov}
\address{Chair of Higher Algebra, Faculty of Mechanics and Mathematics, Lomonosov
Moscow State University, Leninskie Gory 1, GSP-1, Moscow, 119991, Russia}
\email{deviatov@mccme.ru}
\date{}

\begin{abstract}
Let $G$ be a semisimple algebraic group whose decomposition into a product of simple
components does not contain simple groups of type $A$, and $P\subseteq G$ be a parabolic subgroup.
Extending the results of Popov \cp, we enumerate all triples $(G, P, n)$ such that (a) there exists an
open $G$-orbit
on the \textit{multiple flag variety} $G/P\times G/P\times\ldots\times G/P$ ($n$ factors), (b)
the number of $G$-orbits on the multiple flag variety is finite.
\end{abstract}

\maketitle

\section*{Introduction}

Let $G$ be a semisimple connected algebraic group over an algebraically closed field
of characteristic zero, and $P\subseteq G$ be a parabolic subgroup. One easily checks
that the $G$-orbits on $G/P\times G/P$ are in bijection with $P$-orbits on $G/P$. The Bruhat
decomposition of $G$ implies that the number of $P$-orbits on $G/P$ is finite and that these orbits
are enumerated by a subset in the Weyl group $W$ corresponding to $G$.
In particular,
there is an open $G$-orbit on $G/P\times G/P$. So we come to the following questions:
for which $G$, $P$ and $n\ge 3$ is there an open $G$-orbit on the \textit{multiple flag variety}
$(G/P)^n:=G/P\times G/P\times\ldots\times G/P$? For which $G$, $P$ and $n$ is the number of
orbits finite?

Notice that if $G$ is locally isomorphic to $G^{(1)}\times\ldots\times G^{(k)}$, where $G^{(i)}$ are
simple, then there exist parabolic subgroups $P^{(i)}\subseteq G^{(i)}$ such that
$G/P\cong G^{(1)}/P^{(1)}\times\ldots\times G^{(n)}/P^{(n)}$. Hence in the
sequel we may assume that $G$ is simple.
Moreover, let $\pi\colon\widetilde G\to G$ be a simply connected cover. Then $\pi$
induces a bijection between parabolic subgroups $P\subseteq G$ and $\widetilde P\subseteq
\widetilde G$, namely $\widetilde P=\pi^{-1}(P)$, and an isomorphism
$\widetilde G/\widetilde P\to G/P$. Also, $\widetilde G/\widetilde P$ may be considered as
$G$-variety since $\Ker\pi$ acts trivially on it.
In this sense the isomorphism is $G$-equivariant. Therefore we may consider only one
simple group of each type.

The classification of multiple flag varieties with an open $G$-orbit for maximal subgroups
$P$ was given by Popov in \cp. We need some notation to formulate his result.
Fix a maximal torus in $G$ and an associated simple root system $\{\alpha_1, \ldots,
\alpha_l\}$ of the Lie algebra $\mathfrak g=\Lie G$. We enumerate simple roots
as in \cite{Humps}. Let $P_i\subset G$ be the maximal parabolic subgroup corresponding to the
simple root $\alpha_i$.

\begin{theorem}
\cite[Theorem 3]{Popov}\label{popovsresult}
Let $G$ be a simple algebraic group. The diagonal $G$-action on the multiple flag variety
$(G/P_i)^n$ is generically transitive if and only if $n\le 2$ or $(G, n, i)$ is an entry in the following
table:

$$
\begin{array}{c|c}
\mbox{Type of }G & (n, i)\\
\hline\hline
A_l & n < \frac{(l+1)^2}{i(l+1-i)}\\
\hline
B_l, l\ge 3 &
n = 3, i = 1, l\\
\hline
C_l, l\ge 2 &
n = 3, i = 1, l\\
\hline
D_l, l\ge 4 & 
n = 3, i = 1,l-1, l\\
\hline
E_6 & 
n = 3, 4, i = 1, 6\\
\hline
E_7 & 
n = 3, i = 7\\
\end{array}
$$
\end{theorem}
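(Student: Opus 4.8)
The plan is to convert generic transitivity into a dimension criterion about a descending chain of isotropy groups and then to run the resulting computation type by type. Write $X=(G/P_i)^n$ and $d=\dim(G/P_i)$. Fix a generic tuple $(x_1,\dots,x_n)\in X$ and set $H_k=\mathrm{Stab}_G(x_1,\dots,x_k)$, so that $H_0=G$ and $H_n=\mathrm{Stab}_G(x_1,\dots,x_n)$. Since $\dim H_k-\dim H_{k+1}=\dim(H_k\cdot x_{k+1})$, the orbit dimension telescopes: $\dim\bigl(G\cdot(x_1,\dots,x_n)\bigr)=\dim G-\dim H_n=\sum_{k=0}^{n-1}\dim(H_k\cdot x_{k+1})$. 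Each summand is at most $d$, so by lower semicontinuity of orbit dimension the action is generically transitive if and only if $H_k$ has a \emph{dense} orbit on $G/P_i$ for every $k=0,1,\dots,n-1$. This is the criterion I would use throughout.

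For a fixed generic tuple the chain $H_0\supseteq H_1\supseteq\cdots$ is honestly descending, so if $H_k$ fails to have a dense orbit on $G/P_i$ then every later $H_{k'}$ does too; hence there is a threshold $N=N(G,P_i)$ such that the action on $(G/P_i)^n$ is generically transitive exactly when $n\le N$, which is precisely the shape of the table. To compute $N$ I would identify the first links of the chain explicitly. We have $H_0=G$ and $H_1=P_i$, both of which obviously have a dense orbit (the latter being the big Bruhat cell). The stabilizer in $P_i$ of a generic point of $G/P_i$ is the Levi subgroup, so $H_2\cong L_i$; thus $N\ge 2$ always, and $N\ge 3$ if and only if $L_i$ acts with a dense orbit on $G/P_i$. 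Each subsequent step replaces the current reductive isotropy group by the stabilizer in it of a generic point, which I would analyze via its action on the nilradical $\mathfrak u_i$ of the opposite parabolic, regarded as a module over the current group and identified with the tangent space $T_o(G/P_i)$.

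In type $A_l$ the problem is the classical one of the diagonal $SL_{l+1}$-action on $n$-tuples of $i$-dimensional subspaces of $k^{l+1}$, where $G/P_i=\mathrm{Gr}(i,l+1)$ and $d=i(l+1-i)$. Here the isotropy descent is pure linear algebra, or equivalently the combinatorics of the associated star-shaped quiver: the line of scalars is the unavoidable part of every stabilizer, a dense orbit persists up to the dimension bound, and one obtains the sharp condition $n\,i(l+1-i)<(l+1)^2=\dim GL_{l+1}$, i.e.\ the first row of the table. For the remaining types the crude bound $n\,d\le\dim G$ coming from $\dim H_n\ge 0$ is typically far from tight (for $B_l$, $i=1$, it would permit $n$ of order $l$, whereas $N=3$), so the dimension count alone does \emph{not} cut $n$ down to the listed values; the genuine threshold is produced by the dense-orbit analysis of the previous paragraph.

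The main obstacle is exactly this last point: a favorable count $\dim H_k\ge d$ does not by itself yield a dense orbit, so for each candidate triple in types $B$–$G$ the dense-orbit condition must be verified honestly. I would do so by descending the chain $H_0\supseteq H_1\supseteq\cdots$ step by step, at each stage computing the isotropy representation on $\mathfrak u_i$ and deciding from its weights and an explicit generic vector whether the generic orbit has full dimension $d$; the positive cases are certified by an explicit point with open orbit, and the failures by exhibiting a nonconstant invariant on $(G/P_i)^n$, equivalently a positive-dimensional generic isotropy where the count would force a finite one. Carrying this out uniformly is delicate because after the first steps the isotropy groups are no longer Levi subgroups and the modules $\mathfrak u_i$ become reducible; I expect the exceptional groups $E_6$ and $E_7$, together with the borderline spinor and vector cases $i=1,l-1,l$ in $D_l$, to be where the verification is hardest and where the finest information about the $L_i$-module structure of $\mathfrak u_i$ is needed.
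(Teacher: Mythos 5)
First, a point of reference: the paper does not prove this statement at all --- it is quoted verbatim from \cite[Theorem 3]{Popov}, and the only ingredient of Popov's argument that the paper imports is Proposition~\ref{techprop}. Your framework (telescoping the orbit dimension along the chain of successive stabilizers $H_0\supseteq H_1\supseteq\cdots$ and reducing generic $n$-transitivity to the requirement that each $H_k$ act on $G/P_i$ with a dense orbit) is the correct criterion and is essentially Popov's; when $P_i$ is conjugate to $P_i^-$ it specializes to the identity $\gtd(G:G/P_i)=2+\gtd(L_i:\mathfrak u^-)$ recorded in the paper as Proposition~\ref{techprop}. So the strategy is sound, but it is only a strategy.

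The gap is that beyond this setup nothing is actually proved, and the entire content of the theorem is the case-by-case determination of the threshold $N(G,P_i)$. At every point where a computation is required you substitute an intention (``I would do so by\dots'', ``I expect\dots'') or an unproved assertion: in type $A_l$ the claim that a dense orbit ``persists up to the dimension bound'' $n\,i(l+1-i)<(l+1)^2$ is true but is itself a theorem about configurations of subspaces (provable via representations of the star-shaped quiver or by a delicate induction), not a consequence of the dimension count; for types $B$--$E$ no module $\mathfrak u^-$ is decomposed, no invariant is exhibited, and no open orbit is produced, so none of the entries of the table is established. There is also a concrete error in the one structural claim you do make: the stabilizer in $P_i$ of a point in the open $P_i$-orbit on $G/P_i$ is $P_i\cap w_0P_iw_0^{-1}=P_i\cap P_{i^*}^-$, where $\alpha_{i^*}=-w_0\alpha_i$, and this equals the Levi subgroup $L_i$ only when $i^*=i$. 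For $A_l$ with $2i\neq l+1$, for $D_l$ with $l$ odd and $i\in\{l-1,l\}$, and for $E_6$ with $i\in\{1,6\}$ it is a non-reductive group of the same dimension as $L_i$, so ``$H_2\cong L_i$'' fails precisely in several of the cases you single out as hardest; the paper itself has to treat the analogous situation ($D_l$, $l$ odd, $P=P_{1,l}$) by a direct geometric argument exactly because the Levi reduction is unavailable there.
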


In \cp, the following question was posed: for which non-maximal parabolic
subgroups $P\subset G$ is there an open $G$-orbit in $(G/P)^n$? We solve
this problem for all simple groups except for those of type $A_l$.

Denote the intersection $P_{i_1}\cap\ldots\cap P_{i_s}$ by $P_{i_1, \ldots, i_s}$. It is
easy to see that $P_{i_1, \ldots, i_s}$ is a parabolic subgroup and that every parabolic
subgroup is conjugated to some $P_{i_1, \ldots, i_s}$.

\begin{theorem}
\label{mymainresult}
Let $G$ be a simple algebraic group which is not locally isomorphic to $SL_{l+1}$,
$P\subset G$ be a non-maximal parabolic subgroup and $n\ge 3$.
Then the diagonal $G$-action on the multiple flag variety
$(G/P_i)^n$ is generically transitive if and only if $n=3$ and $(G, P)$ is one of the following pairs:

$$
\begin{array}{c|c}
\mbox{Type of }G & P\\
\hline\hline
D_l, l\ge 5\mbox{ is odd} & P_{1, l-1},P_{1,l}\\
\hline
D_l, l\ge 4\mbox{ is even} & P_{1, l-1}, P_{1, l},P_{l-1, l}\\
\end{array}
$$
\end{theorem}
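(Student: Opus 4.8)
The plan is to reduce generic transitivity on $(G/P)^n$ to the existence of a dense orbit of one reductive group on a single copy of $G/P$, and then to settle each case. Write $P=P_{i_1,\dots,i_s}$ with $s\ge 2$. For every $j$ the projection $(G/P)^n\to(G/P_{i_j})^n$ is $G$-equivariant and surjective, so a dense $G$-orbit upstairs maps onto a dense, hence open, $G$-orbit downstairs; by Theorem~\ref{popovsresult} each triple $(G,n,i_j)$ must therefore appear in Popov's table. Since $s\ge 2$ and $n\ge 3$, this forces $n=3$ (with $n=4$ possible only for $E_6$), discards $E_7,E_8,F_4,G_2$ immediately, and leaves the finite candidate list $P_{1,l}$ for $B_l$ and $C_l$, the four parabolics $P_{1,l-1},P_{1,l},P_{l-1,l},P_{1,l-1,l}$ for $D_l$, and $P_{1,6}$ for $E_6$.

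The diagonal action on $(G/P)^2$ is already generically transitive, so I fix a generic pair of flags and set $H=P\cap\dot w_0P\dot w_0^{-1}$, a reductive group with $\dim H=\dim G-2\dim(G/P)$. Differentiating the orbit map at $(eP,\dot w_0P,gP)$ and using surjectivity onto the first two factors, one sees that the full tangent map is onto iff $\mathfrak h\to\mathfrak g/\mathrm{Ad}(g)\mathfrak p$ is onto for generic $g$; equivalently, $G$ is generically transitive on $(G/P)^3$ iff $H$ has a dense orbit on $G/P$. Whether $H$ is the Levi $L_S$ or a smaller twisted intersection $P_S\cap P^-_{\sigma(S)}$ is dictated by $\sigma=-w_0$: one gets $H=L_S$ when $\sigma(S)=S$, which covers $B_l,C_l,E_6$, all of $D_l$ with $l$ even, and the sets $\{l-1,l\}$ and $\{1,l-1,l\}$ in $D_l$ for every $l$.

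A dense $H$-orbit forces $\dim H\ge\dim(G/P)$, i.e.\ $\dim G\ge 3\dim(G/P)$. Computing $\dim(G/P)$ from the nilradical, this inequality already eliminates $B_l,C_l$ for $3\le l\le 5$, the parabolic $P_{1,l-1,l}$ of $D_l$ for $4\le l\le 8$, and the value $n=4$ for $E_6$. The real difficulty is the complementary range, where the dimension is large enough yet a dense orbit must still be excluded. Here I would compute the generic $H$-isotropy on $G/P$ through $\dim(\mathfrak h\cap\mathrm{Ad}(g)\mathfrak p)$ for generic $g$ and show it exceeds $\dim H-\dim(G/P)$. For $B_l,C_l$, where the semisimple part of $H$ is of type $A_{l-2}$ acting on a spinor-type flag, and for $E_6$, where it is the $D_4$ acting on the $24$-dimensional $G/P_{1,6}$, I expect a nonconstant $H$-invariant, equivalently a positive-dimensional generic isotropy, to obstruct density; the same should dispose of $P_{1,l-1,l}$ in $D_l$ for large $l$.

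For the surviving $D_l$ cases I would work with isotropic subspaces of $\mathbb{C}^{2l}$, so that $G/P_{l-1,l}$ is the variety of isotropic $(l-1)$-planes and $H=L_{l-1,l}\cong GL_{l-1}\times SO_2$ is the stabilizer of a reference plane $U_0$. Writing a generic isotropic $(l-1)$-plane as a graph over a complement produces, besides a component valued in the hyperbolic plane $U_0^\perp/U_0$, a skew-symmetric form on an $(l-1)$-dimensional space; the parity of $l$ governs the generic rank of this form and hence the jump in the generic $H$-isotropy, which is exactly why $P_{l-1,l}$ is generically transitive precisely for even $l$. For $P_{1,l-1}$ and $P_{1,l}$ I would instead exhibit an explicit transverse triple and verify surjectivity of the tangent map for all $l\ge 4$, keeping in mind that for odd $l$ the relevant $H$ is the twisted intersection $P_{1,l-1}\cap P^-_{1,l}$ rather than a Levi. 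I expect the decisive difficulty to be precisely these last two points: extracting the parity of $l$ from the generic isotropy of $L_{l-1,l}$ on the orthogonal Grassmannian, and producing the invariants that kill $B_l,C_l,E_6$ and $P_{1,l-1,l}$ in the range where the dimension count is silent.
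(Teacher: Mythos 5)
Your reduction is sound and matches the paper's: the projections $(G/P)^n\to(G/P_{i_j})^n$ together with Theorem \ref{popovsresult} yield exactly the candidate list you state, and your criterion that generic $3$-transitivity is equivalent to the stabilizer $H$ of a generic pair having a dense orbit on $G/P$ is the content of Proposition \ref{techprop} (with $H=L$ acting on the open cell $\mathfrak u^-$ when $-w_0$ fixes the index set, as you correctly note). The dimension counts you give are also correct as far as they go. But from that point on the proposal is a plan rather than a proof, and the deferred steps are precisely where all the work lies. In every negative case that survives the dimension count ($B_l$ and $C_l$ for large $l$, $E_6$ with $n=3$, $P_{1,l-1,l}$ in $D_l$ for large $l$, and $P_{l-1,l}$ in $D_l$ for $l$ odd) you say you ``expect'' a nonconstant $H$-invariant; the paper actually produces one in each case by decomposing $\mathfrak u^-$ into irreducible $L$-summands $V_1\oplus\dots\oplus V_k$ and combining the $GL$-invariant pairings between suitable summands into a rational function of $\mathbb K^*$-weight zero (e.g.\ $(u_1,u_3)^2/(u_1,u_4)$ for $B_l$, $(u_1,u_2)/u_4$ for $C_l$, $(u_1,u_1)^a(u_2,u_2)^b(u_3,u_3)^c$ for $E_6$, and $u_1^{-1}(u_2,u_3)$ for $P_{l-1,l}$ with $l$ odd). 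Without these the ``only if'' direction is unproved for infinitely many groups.

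The positive cases are in the same state. For $P_{l-1,l}$ and $P_{1,l}$ with $l$ even the paper gives an explicit transitivity argument on an open subset of $V_1\oplus V_2\oplus V_3$: normalize the generic degenerate skew form $u_1$ to a standard matrix, then use its stabilizer (essentially $\mathbb K^*\times Sp_{l-2}$) to normalize $u_2$ and $u_3$; your sketch identifies the relevant parity of $\rk u_1$ but does not carry this out. More seriously, for $P_{1,l}$ with $l$ odd the Levi reduction is unavailable (as you note, $H$ is a twisted intersection), and the paper needs a separate, fairly long direct argument: it realizes $G/P_{1,l}$ as pairs consisting of a maximal isotropic subspace and a line in it, cuts out an explicit open set $Y\subset(G/P)^3$ by seven incidence conditions, and proves transitivity on $Y$ via a normal-form lemma for triples of pairwise transversal maximal isotropic subspaces, including a determinant check to stay inside $SO_{2l}$ rather than $O_{2l}$. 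Your alternative of exhibiting an explicit transverse triple and verifying surjectivity of the tangent map is legitimate in principle, but nothing of it is done, and this is the single hardest case of the theorem. As it stands the proposal establishes only the reduction to the candidate list.
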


Now let us consider actions with a finite number of orbits. Recall that a $G$-variety
$X$ is called \textit{spherical} if a Borel subgroup $B\subseteq G$ acts on $X$ with an open orbit.
It is well-known that the number of $B$-orbits on a
spherical variety is finite, see \cite{Bri}, \cite{Vin}.
Equivalently, the number of $G$-orbits on $G/B\times X$
is finite if $X$ is spherical. Therefore, if $P\subseteq G$ is a parabolic subgroup and
$X$ is a spherical $G$-variety, then the number of $G$-orbits on $G/P\times X$ is finite.
The classification of all pairs of parabolic subgroups $(P, Q)$ such that $G/P\times G/Q$ is
spherical is given in \cite{Litt} and \cite{Stem}.
According to this classification, if $(G, P_i)$ is an entry in the table from
Theorem \ref{popovsresult}, then $G/P_i\times G/P_i$ is spherical and hence the number of
$G$-orbits on $G/P_i\times G/P_i\times G/P_i$ is finite. In the last section we
prove that the number of $G$-obits on $(G/P)^n$ is infinite if $n\ge 4$. We also
check directly that if $(G, P)$ is an entry in the table from Theorem
\ref{mymainresult}, then the number of $G$-orbits on $G/P\times G/P\times G/P$ is infinite.
Thus we come to the following result.

\begin{theorem}\label{mysecondaryresult}
Let $G$ be a simple algebraic group, $P\subset G$ be a parabolic subgroup and $n\ge 3$.
The following properties are equivalent.
\begin{enumerate}
\item The number of $G$-orbits on $(G/P)^n$ is finite.
\item $n=3$, $P$ is maximal, and there is an open $G$-orbit on $G/P\times G/P\times G/P$.
\item $n=3$, and $G/P\times G/P$ is spherical.
\end{enumerate}
\end{theorem}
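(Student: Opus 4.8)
The plan is to establish the equivalence by the cycle $(2)\Rightarrow(3)\Rightarrow(1)\Rightarrow(2)$, using Popov's Theorem \ref{popovsresult}, the author's Theorem \ref{mymainresult}, the spherical classification of \cite{Litt} and \cite{Stem}, and the two infinitude statements to be proved in the last section (that $(G/P)^n$ has infinitely many orbits once $n\ge 4$, and that every entry of the table in Theorem \ref{mymainresult} gives infinitely many orbits on the triple). The two implications involving sphericity are immediate. For $(2)\Rightarrow(3)$: if $n=3$, $P=P_i$ is maximal and $(G/P_i)^3$ has an open orbit, then since $n=3>2$ Theorem \ref{popovsresult} forces $(G,3,i)$ to be an entry of Popov's table, and by \cite{Litt}, \cite{Stem} each such entry makes $G/P_i\times G/P_i$ spherical. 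For $(3)\Rightarrow(1)$: if $n=3$ and $X:=G/P\times G/P$ is spherical, then $G/P\times X=(G/P)^3$ has finitely many $G$-orbits, because the product of a generalized flag variety $G/P$ with a spherical variety has finitely many orbits (\cite{Bri}, \cite{Vin}).

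The content of the theorem lies in $(1)\Rightarrow(2)$. I would first pin down $n$: by the last section $(G/P)^n$ has infinitely many orbits once $n\ge 4$, so finiteness forces $n=3$. Next, $(G/P)^3$ is irreducible and, by hypothesis, a finite union of locally closed $G$-orbits; hence the orbit of maximal dimension is dense, and a dense orbit (being open in its closure) is open. Thus $(G/P)^3$ carries an open $G$-orbit, and what remains—the real point—is to prove that $P$ is maximal.

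Assume $P$ is non-maximal; I must contradict finiteness. When $G$ is not locally isomorphic to $SL_{l+1}$, the existence of the open orbit together with Theorem \ref{mymainresult} forces $(G,P)$ to be one of the type-$D$ pairs $P_{1,l-1}, P_{1,l}, P_{l-1,l}$ listed there, since for every other non-type-$A$ simple $G$ and non-maximal $P$ there is no open orbit on $(G/P)^3$ at all. But the last section checks that each of these pairs has infinitely many $G$-orbits on $G/P\times G/P\times G/P$, contradicting $(1)$. Hence for $G\not\cong SL_{l+1}$ no non-maximal $P$ survives, and $(2)$ follows.

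The main obstacle is the excluded case $G\cong SL_{l+1}$ with $P$ non-maximal, which Theorem \ref{mymainresult} does not treat. Here the task is to show that a non-maximal $P$ always produces infinitely many $G$-orbits on $(G/P)^3$, equivalently that $G/P\times G/P$ is never spherical for such $P$. I expect this to be the hardest step, because it cannot be deduced formally from the finiteness of $P$-orbits on $G/P\times G/P$ given by $(1)$: sphericity is a statement about $B$-orbits, and $B\subsetneq P$. I would attack it by producing a nonconstant modulus directly: a non-maximal $P$ in $SL_{l+1}$ involves at least two nested subspaces per factor, so a triple of such flags carries a continuous invariant of cross-ratio type—analogous to the construction used in the last section for $n\ge 4$—yielding a one-parameter family of $G$-orbits. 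Alternatively one may invoke the Magyar--Weyman--Zelevinsky finite-type classification of type-$A$ multiple flag varieties, under which three equal non-maximal flags correspond to a star quiver with three arms of length at least two (affine $\widetilde E_6$ or wild) whose forced dimension vector meets the imaginary-root cone, again giving infinitely many isomorphism classes, i.e. orbits.
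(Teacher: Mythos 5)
Your proposal follows the same route as the paper's: $(3)\Rightarrow(1)$ via Brion--Vinberg, $(2)\Rightarrow(3)$ by matching Popov's table against the Littelmann--Stembridge classification, $n=3$ forced by the $n\ge4$ infinitude proposition, an open orbit forced by finiteness on an irreducible variety, and maximality forced by Theorem \ref{mymainresult} together with the direct cross-ratio check for the type-$D$ pairs $P_{1,l}$, $P_{l-1,l}$. You are in fact more explicit than the paper about the one delicate point, non-maximal parabolics in $SL_{l+1}$, which the paper covers only through its passing reference to the classification of \cite{mwz}; your proposed appeal to that classification is exactly the intended argument.
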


\begin{corollary}
Let $n\ge 3$.
The number of $G$-orbits on $(G/P)^n$ is finite if and only if $n=3$ and $(G, P)$ is one
of the pairs listed in the following table:

$$
\begin{array}{c|c}
\mbox{Type of }G & P\\
\hline\hline
A_l & \mbox{any maximal}\\
\hline
B_l, l\ge 2 & P_1, P_l\\
\hline
C_l, l\ge 3 & P_1, P_l\\
\hline
D_l, l\ge 4 & P_1, P_{l-1}, P_l\\
\hline
E_6 & P_1, P_6\\
\hline
E_7 & P_7\\
\end{array}
$$
\end{corollary}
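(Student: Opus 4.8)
The plan is to obtain the corollary as a direct consequence of the equivalence proved in Theorem~\ref{mysecondaryresult} together with Popov's classification, Theorem~\ref{popovsresult}. By Theorem~\ref{mysecondaryresult}, the number of $G$-orbits on $(G/P)^n$ is finite if and only if property~(2) holds, i.e. $n=3$, the parabolic $P$ is maximal, and there is an open $G$-orbit on $G/P\times G/P\times G/P$. In particular any admissible pair must have $n=3$ and $P=P_i$ for some $i$, so it remains only to determine, type by type, which maximal parabolics $P_i$ make the diagonal action on $(G/P_i)^3$ generically transitive. By Theorem~\ref{popovsresult} this is read off by specializing its table to $n=3$, and the corollary's table should coincide with the result of this specialization.

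For the types other than $A_l$ the verification is immediate transcription. Setting $n=3$ in Popov's table gives the values $i=1,l$ for $B_l$ and for $C_l$, the values $i=1,l-1,l$ for $D_l$, the values $i=1,6$ for $E_6$ (the additional entry $n=4$ being irrelevant once finiteness forces $n=3$), and $i=7$ for $E_7$; since $F_4$ and $G_2$ have no entries in Theorem~\ref{popovsresult}, no maximal parabolic of these types admits an open orbit for $n\ge 3$, so they contribute nothing. These are exactly the entries $P_1,P_l$; $P_1,P_{l-1},P_l$; $P_1,P_6$; and $P_7$ recorded in the table. A minor point is to present each type only once: the low-rank coincidences $B_2\cong C_2$ and $A_3\cong D_3$ are absorbed by the stated rank bounds ($B_l$ with $l\ge 2$ and $C_l$ with $l\ge 3$).

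Type $A_l$ requires a short estimate to justify the entry ``any maximal''. Popov's condition at $n=3$ reads $3<\frac{(l+1)^2}{i(l+1-i)}$, equivalently $3\,i(l+1-i)<(l+1)^2$. Since $i+(l+1-i)=l+1$, the arithmetic-geometric mean inequality gives $i(l+1-i)\le\frac{(l+1)^2}{4}$, whence $3\,i(l+1-i)\le\frac{3(l+1)^2}{4}<(l+1)^2$ for every $i$ with $1\le i\le l$. The inequality is therefore satisfied by all maximal parabolics of $A_l$, which is precisely the assertion of the table.

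I expect no genuine mathematical obstacle here, since the statement is a corollary; the only care needed is bookkeeping. Concretely, one must transcribe the $n=3$ slice of Theorem~\ref{popovsresult} without omission, observe that the $A_l$ estimate is \emph{strict} so that no maximal parabolic is lost at a boundary, and ensure the low-rank isomorphisms are not double-counted across the listed types. With these checks in place, the corollary follows at once from Theorems~\ref{mysecondaryresult} and~\ref{popovsresult}.
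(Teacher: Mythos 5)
Your proposal is correct and follows exactly the route the paper intends: the corollary is stated without a separate proof precisely because it is the combination of Theorem~\ref{mysecondaryresult} (reducing finiteness to $n=3$, $P$ maximal, with an open orbit on the triple product) and the $n=3$ slice of Theorem~\ref{popovsresult}. Your explicit AM--GM check for type $A_l$ and the remark on the low-rank relabelings $B_2\cong C_2$, $A_3\cong D_3$ are exactly the bookkeeping the paper leaves implicit.
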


Let us mention a more general result for classical groups.
Let $Q_{(1)}, \ldots, Q_{(n)}$ be parabolic subgroups in $G$. We call the variety
$G/Q_{(1)}\times\ldots\times G/Q_{(n)}$ a \textit{generalized multiple flag variety}. The classification
of all generalized multiple flag varieties with a finite number of $G$-orbits is
given in \cite{mwz} for $G=SL_{l+1}$ and in \cite{mwzsp} for $G=Sp_{2l}$.

Proofs of Theorems 2 and 3 use methods developed in \cp. The results concerning existence
of an open orbit in a linear representation space in Section \ref{sectopenorbit} may be of independent interest.
In several cases for $G=SO_{2l}$ the existence of an open orbit on a multiple flag variety
is checked directly.

\subsection*{Acknowledgements.} I am grateful to my scientific advisor Ivan V.
Arzhantsev for posing the problem and for paying attention to my work. Also I thank
professor Vladimir L. Popov for useful comments and Dmitri A. Timashev for bringing 
my attention to the articles \cite{Litt} and \cite{Stem}.

\section{Preliminaries}

Let $G$ be a connected simple algebraic group over an algebraically closed field $\mathbb K$
of characteristic zero and $\mathfrak g=\Lie G$. Fix a Borel subgroup $B\subset G$ and a
maximal torus $T\subset B$. These data determine a root system $\Phi$ of $\mathfrak g$,
a positive root subsystem $\Phi^+$ and a system of simple roots $\Delta\subseteq\Phi^+$,
$\Delta=\{\alpha_1, \ldots, \alpha_l\}$. Choose a corresponding Chevalley basis
$\{x_i, y_i, h_i\}$ of $\mathfrak g$. We have $[h, x_i]=\alpha(h)x_i$, $[h, y_i]=-\alpha(h)y_i$
for all $h\in\mathfrak t=\Lie T$ and $h_i=[x_i,y_i]$.

Let $I=\{\alpha_{i_1},\ldots,\alpha_{i_s}\}\subseteq\Delta$ be a subset. The Lie algebra of
the parabolic subgroup $P_I:=P_{i_1,\ldots,i_s}$ is
$$
\mathfrak p=\mathfrak b\oplus\bigoplus_{\alpha\in\Phi_I}\mathfrak g_\alpha,
$$
where $\mathfrak b=\Lie B$ and $\Phi_I\subseteq \Phi^-$ denotes the set of the negative roots
such that their decomposition into the sum of simple roots \textit{does not} contain
the roots $\alpha_i$, $i\in I$. For example, $P_\Delta=B$ and $P_\varnothing=G$.
It is known that \cite[Theorem 30.1]{Hump} if a parabolic group $P$ contains $B$, then $P=P_I$ for some
$I\subseteq\Delta$. Therefore any parabolic subgroup $P\subseteq G$ is conjugate to some
$P_I$. 
If $P=P_I$ for some $I\subseteq\Delta$, we denote by $P^-$ the parabolic subgroup
whose Lie algebra is
$$
\mathfrak p^-=\mathfrak t\oplus
\bigoplus_{\alpha\in-\Phi_I\cup\Phi^-}\mathfrak g_\alpha.
$$

Denote the weight lattice of $T$ by $\mathfrak X(T)$. Let $\mathfrak X^+(T)$ be the subsemigroup
of dominant weights with respect to $B$.
Assume first that $G$ is simply connected. Then $\mathfrak X^+$ is generated by the fundamental weights
$\pi_1,\ldots, \pi_l$. Given a dominant weight $\lambda$, denote the simple
$G$-module with the highest weight $\lambda$ by $V(\lambda)$. If $G$ is not simply connected,
we may consider a simply connected cover $p\colon\widetilde G\to G$, the dominant weight
lattice $\mathfrak X^+(p^{-1}(T))$ and the highest weight $\widetilde G$-module $V(\lambda)$.

Let $G$ be a simple group and $P=P_{i_1,\ldots, i_s}$ be a parabolic subgroup. Notice that
if there is an open $G$-orbit on $(G/P)^n$, then there exists an open $G$-orbit
on $(G/P_i)^n$ for all $i\in\{i_1,\ldots, i_s\}$. Indeed, since $P\subseteq P_i$,
one has the surjective $G$-equivariant map $G/P\to G/P_i$, $gP\mapsto gP_i$. It induces
the surjective $G$-equivariant map $\varphi\colon (G/P)^n\to (G/P_i)^n$, and the image of
an open $G$-orbit on $(G/P)^n$ under $\varphi$ is an open $G$-orbit on $(G/P_i)^n$.
Similarly, if $G$ acts on $(G/P)^n$ with an open orbit and $m<n$, then $G$ acts on $(G/P)^m$
with an open orbit.

Theorem 1 leaves us very few cases of non-maximal parabolic groups to consider.
Namely, if $n>3$ and $G$ is of type $B_l$, $C_l$ or $D_l$, then $G$ never acts on $(G/P)^n$ with
an open orbit. If $n=3$ and $G$ is of type $B_l$ or $C_l$, it suffices to consider
$P=P_{1,l}$, and we show that there is no open orbit in this case. If $n=3$ and $G$ is of type
$D_l$, an open orbit may exist only if $P=P_I$ where $I\subseteq
\{\alpha_1, \alpha_{l-1}, \alpha_l\}$. So there are four cases to consider.
We reduce the case $P_{1,l-1}$ to the case $P_{1,l}$. If $G$ is of type $E_6$,
the only parabolic group we should consider is $P=P_{1,6}$. We show that there is
no open orbit for $n=3$. If $G$ is of type $E_7$, if there existed an open $G$-orbit on
$(G/P)^n$ for $n\ge 3$, then the only maximal parabolic subgroup containing $P$ would be
$P_7$, but in this case $P$ should be maximal itself. If $G$ is of type $E_8$, $F_4$ or
$G_2$, an open orbit exists for no maximal parabolic subgroups for $n\ge 3$, so there are no cases
to consider.

Given a group $G$ acting on an irreducible variety $X$ with an open orbit, according to \cp\
we denote the maximal $n$ such that there is an open $G$-orbit on $X^n$ by
$\gtd (G:X)$. If $G$ acts on $X^n$ with an open orbit, we say that the action
$G:X$ is \textit{generically $n$-transitive}.

We make use of the following fact proved by Popov.
\begin{proposition}
\cite[Corollary 1 (ii) of Proposition 2]{Popov}\label{techprop}
Let $G$ be a simple algebraic group, $P$ be a parabolic subgroup, $P^-$ be an opposite
parabolic subgroup, $L=P\cap P^-$ be the corresponding Levi subgroup
and $\mathfrak u^-$ be the Lie algebra of the unipotent radical of $P^-$.
If $P$ is conjugate to $P^-$, then $\gtd(G:G/P)=2+\gtd(L:\mathfrak u^-)$.
\end{proposition}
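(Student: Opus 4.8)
The plan is to compute $\gtd(G:G/P)$ by two successive reductions: first peel off the first two factors of $(G/P)^n$, passing from $G$ to its Levi subgroup $L$, and then replace $G/P$ by its big cell, passing from the $L$-action on $G/P$ to the linear $L$-action on $\mathfrak u^-$. Throughout I will lean on one elementary observation: if an irreducible $G$-variety $X$ has a dense open $G$-stable subset $X'$, then $G$ has an open orbit on $X$ if and only if it has one on $X'$. Indeed, an open orbit is dense, hence meets $X'$; being a single orbit it has no proper nonempty open $G$-stable subset, so it is entirely contained in $X'$, where it is again open. The converse is immediate since $X'$ is open in $X$.

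Step one. Let $o_1=eP\in G/P$, so that the stabilizer of $o_1$ is $P$. Since $P$ is conjugate to $P^-$, the opposite parabolic $P^-$ is the stabilizer of some point $o_2\in G/P$, and the stabilizer of the pair $(o_1,o_2)$ is $P\cap P^-=L$. As $\dim G/L=\dim\mathfrak u+\dim\mathfrak u^-=2\dim G/P$ (where $\mathfrak u$ is the Lie algebra of the unipotent radical of $P$), the orbit $G\cdot(o_1,o_2)\cong G/L$ has full dimension and is therefore the open $G$-orbit in $(G/P)^2$. Writing $(G/P)^n\cong(G/P)^2\times(G/P)^{n-2}$ and applying the observation to the dense open $G$-stable subset $(G/L)\times(G/P)^{n-2}$, I reduce to deciding when $G$ has an open orbit on $(G/L)\times(G/P)^{n-2}$. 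The standard $G$-equivariant isomorphism $(G/L)\times Y\cong G\times_L Y$, $(gL,y)\mapsto[g,g^{-1}y]$, identifies $G$-orbits with $L$-orbits on $Y$ and dense orbits with dense orbits; taking $Y=(G/P)^{n-2}$ shows that $G$ has an open orbit on $(G/P)^n$ if and only if $L$ has an open orbit on $(G/P)^{n-2}$. Hence $\gtd(G:G/P)=2+\gtd(L:G/P)$.

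Step two. The big cell $\Omega=U^-\cdot o_1$ is dense and open in $G/P$, and it is $L$-stable since $L$ fixes $o_1$ and normalizes $U^-$. Under the isomorphism $U^-\to\Omega$, $u\mapsto u\cdot o_1$, the $L$-action becomes conjugation on $U^-$, which corresponds via $\exp$ (here characteristic zero is used) to the adjoint action on $\mathfrak u^-$; thus $\Omega\cong\mathfrak u^-$ as $L$-varieties. Since $\Omega^{n-2}$ is a dense open $L$-stable subset of $(G/P)^{n-2}$, the observation applied to $L$ shows that $L$ has an open orbit on $(G/P)^{n-2}$ if and only if it has one on $(\mathfrak u^-)^{n-2}$, so $\gtd(L:G/P)=\gtd(L:\mathfrak u^-)$. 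Combining the two steps gives $\gtd(G:G/P)=2+\gtd(L:\mathfrak u^-)$, as claimed.

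The only genuinely non-mechanical point, occurring in both steps, is the transfer of an open orbit from the full product to its distinguished dense open stable subset, which is exactly what the preliminary observation handles. The conceptual key is that the hypothesis that $P$ is conjugate to $P^-$ is precisely what forces the generic pair of flags to have the \emph{reductive} stabilizer $L$, so that the first reduction produces the Levi subgroup rather than a smaller or non-reductive group; I expect the identification $\Omega\cong\mathfrak u^-$ and the accompanying transfer of open orbits through the big cell to be the most delicate bookkeeping, though no serious obstacle remains once the dense-open-stable observation is in hand.
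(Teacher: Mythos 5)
Your argument is correct and complete: the reduction from $(G/P)^n$ to $(G/L)\times(G/P)^{n-2}$ via the full-dimensional orbit of a pair of opposite flags, the passage to $L$-orbits through $G\times_L(G/P)^{n-2}$, and the identification of the big cell with $\mathfrak u^-$ via $\exp$ all hold as you state them, and the dense-open-stable-subset observation correctly handles the transfer of open orbits at each stage. The paper itself gives no proof of this proposition --- it is imported verbatim from Popov's article as Corollary 1(ii) of Proposition 2 there --- and your two-step reduction is essentially the standard argument underlying Popov's result, so there is nothing to flag beyond noting that the comparison is with a cited, not a proved, statement.
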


We suppose that the group $SO_l$ acts in the $l$-dimensional space and preserves the
bilinear form whose matrix with respect to a standard basis is
$$
Q=\left(
\begin{array}{cccccc}
&&&&&1\\
&
\largezero
&&&\udots&\\
&&&1&&\\
&&1&&&\\
&\udots&&&\largezero&\\
1&&&&&
\end{array}
\right).
$$

We denote the $l$-dimensional projective space by $\mathbf P^l$ and
the Grassmannian of $k$-dimensional subspaces in $\mathbb K^l$ by $\mathrm{Gr}(k,l)$.

\section{Existence of an open orbit}\label{sectopenorbit}

\subsection{Groups of type $B_l$}
By Theorem \ref{popovsresult}, it is sufficient to consider the case $P=P_{1,l}$.
The Dynkin diagram $B_l$ has no automorphisms,
hence $P$ is conjugate to $P^-$. So we may apply Proposition \ref{techprop}, and it
suffices to check that $\gtd(L:\mathfrak u^-)=0$, i. e. $L$ acts on $\mathfrak u^-$ with
no open orbit.


Let $G=SO_{2l+1}$. Then $L=\mathbb K^*\times GL_{l-1}$ and the $L$-module $\mathfrak u^-$ can
be decomposed into the direct sum $V_1\oplus V_2\oplus V_3\oplus V_4\oplus V_5$. Here
$V_1$ is 
a $GL_{l-1}$-module $(\mathbb K^{l-1})^*$ dual to the tautological one
and its $\mathbb K^*$-weight is 1,
$V_2$ is a trivial one-dimensional $GL_{l-1}$-module of weight 1,
$V_3$ is a tautological $GL_{l-1}$-module $\mathbb K^{l-1}$ of weight 0,
$V_4$ is a $GL_{l-1}$-module $\mathbb K^{l-1}$ of weight 1,
$V_5$ is 
a $GL_{l-1}$-module $\Lambda^2\mathbb K^{l-1}$ 
of weight 0.
According to this decomposition, we denote components of a vector $u\in \mathfrak u^-$
by $u_1, u_2, u_3, u_4, u_5$.

Notice that there exists a $GL_{l-1}$-invariant pairing between $V_1$ and $V_3$.
Its $\mathbb K^*$-weight is 1. Also there exists a $GL_{l-1}$-invariant pairing
between $V_1$ and $V_4$, whose $\mathbb K^*$-weight is 2.
Therefore the rational function
$$
\frac{(u_1, u_3)^2}{(u_1, u_4)}
$$
is a non-constant invariant for $L:\mathfrak u^-$, and the action of $G$ on $G/P$ is not
generically 3-transitive.

\subsection{Groups of type $C_l$}
This case is completely similar to the previous one, and again the only thing we should do is to
prove that there is no open $L$-orbit on $\mathfrak u^-$, where $L=P\cap P^-$ is a Levi subgroup
of $P=P_{1,l}$ and $\mathfrak u^-$ is the Lie algebra of the unipotent radical of $P^-$.


Let $G=Sp_{2l}$. Then $L=\mathbb K^*\times GL_{l-1}$ and the $L$-module $\mathfrak u^-$ can
be written as
$V_1\oplus V_2\oplus V_3\oplus V_4$. Here
$V_1$ is 
a $GL_{l-1}$-module $(\mathbb K^{l-1})^*$ 
and its $\mathbb K^*$-weight is 1,
$V_2$ is a $GL_{l-1}$-module $\mathbb K^{l-1}$ of weight 1,
$V_3$ is 
a $GL_{l-1}$-module $S^2\mathbb K^{l-1}$
of weight 0,
$V_4$ is a trivial $GL_{l-1}$-module of weight 2.
According to this decomposition, we denote components of a vector $u\in \mathfrak u^-$
by $u_1, u_2, u_3, u_4$.

We see that there exists a $GL_{l-1}$-invariant pairing between $V_1$ and $V_2$ 
with $\mathbb K^*$-weight is 2. Therefore we have the rational invariant
$$
\frac{(u_1, u_2)}{u_4}
$$
for $L:\mathfrak u^-$, and the action of $G$ on $G/P$ is not generically 3-transitive.

\subsection{Groups of type $D_l$}
This time we should consider the following four cases of parabolic subgroups: $P=P_{1,l-1},
P_{1,l}, P_{l-1,l}, P_{1,l-1,l}$. 
One easily checks that $P$ and $P^-$ are conjugate except for the cases $P=P_{1,l}$, $l$ odd,
and $P=P_{1,l-1}$, $l$ odd.

Let $G=SO_{2l}$. There exists a diagram automorphism of $G$ that interchanges $\alpha_{l-1}$
and $\alpha_l$. It preserves the maximal torus and the Borel subgroup and interchanges $P_{1,l-1}$
and $P_{1,l}$. Therefore, the actions $G:G/P_{1,l-1}$ and $G:G/P_{1,l}$ are either
generically 3-transitive or not generically 3-transitive simultaneously.

\subsubsection{$P=P_{l-1,l}$} In this case, $P$ and $P^-$ are conjugate, and we have to
find $\gtd(L:\mathfrak u^-)$.


The Levi subgroup $L$ is isomorphic to $\mathbb K^*\times GL_{l-1}$ and the $L$-module
$\mathfrak u^-$ is isomorphic to $V_1\oplus V_2\oplus V_3$, where
$V_1$ is 
a $GL_{l-1}$-module $\Lambda^2\mathbb K^{l-1}$
and its $\mathbb K^*$-weight is 0,
$V_2$ is a $GL_{l-1}$-module $\mathbb K^{l-1}$ of weight 1,
$V_3$ is a $GL_{l-1}$-module $\mathbb K^{l-1}$ of weight $-1$.
We denote components of a vector $u\in \mathfrak u^-$
by $u_1, u_2, u_3$.

\underline{Let $l$ be odd.} Then a generic element $u_1\in V_1$ gives rise to a
non-degenerate skew-symmetric bilinear form on the $GL_{l-1}$-module $(\mathbb K^{l-1})^*$.
Furthermore, one can consider the corresponding skew-symmetric
form on the tautological $GL_{l-1}$-module. This form is obtained by matrix inversion
and we denote it by $u_1^{-1}$. The following function is a rational $L$-invariant:
$$
u_1^{-1}(u_2, u_3).
$$
Thus the action of $G$ on $G/P$ is not generically 3-transitive.

\underline{Let $l$ be even.} We prove that there is an open $L$-orbit
on $\mathfrak u^-$.

Consider the $GL_{l-1}$-module $V'=V_1\oplus V_2$, where $V_2$ is a 
$GL_{l-1}$-module $\mathbb K^{l-1}$ and $V_1=\Lambda^2\mathbb K^{l-1}$. 

Since $l-1$ is odd, the rank of a generic element $w\in V_1$ is $l-2$. Denote the set
of all $w\in V_1$ such that $\rk w=l-2$ by $Z$. Any element
$w\in Z$ gives rise to a (degenerate) skew-symmetric form on $V_2^*$, and
$\dim\Ker w=1$. 
Consider the subspace $(\Ker w)^\bot\subset V_2$ where all
the functions from the kernel vanish. Denote $V_2\setminus (\Ker w)^\bot$ by $X_w$.
Clearly, $W_1=\cup_{w\in Z}(w\times X_w)$ is an open $GL_{l-1}$-invariant subset
of $V'$.


Let us prove that $GL_{l-1}$ acts transitively on $W_1$.
First, given an element $u=(u_1,u_2)\in W_1$, one can apply an element of $GL_{l-1}$ such that the matrix of
the bilinear form $u_1$ in the corresponding basis is
$$
R=
\left(
\begin{array}{cccccc}
0&&&&&\\
&0&1&&\largezero&\\
&-1&0&&&\\
&&&\ddots&&\\
&\largezero&&&0&1\\
&&&&-1&0
\end{array}
\right).
$$

The first coordinate of $u_2$ in the new basis is non-zero since $u_2\in X_{u_1}$. Denote
the $i$-th coordinate of $u_2$ by $(u_2)_i$. The following
element of $GL_{l-1}$ preserves the bilinear form with matrix $R$:
$$
\left(
\begin{array}{cccc}
1&0&\ldots&0\\
-(u_2)_3/(u_2)_1\\
\vdots&&I_{l-2}\\
-(u_2)_{l-1}/(u_2)_1
\end{array}
\right).
$$
When we apply it to $u_2$, all its coordinates will be zero except for the first one.

So any element of $W_1$ can be transformed by $GL_{l-1}$-action to 
an element of the form $u_1=R$, $u_2=((u_2)_1, 0,
\ldots, 0)^T$, where $(u_2)_1\ne 0$. Clearly, all these elements belong to
the same $GL_{l-1}$-orbit. Call such an element of $V'$ \textit{canonical}, 
i. e. call an element $(u_1, u_2)\in V'$ \textit{canonical} if $u_1=R$, $u_2=((u_2)_1, 0,
\ldots, 0)^T$, where $(u_2)_1\ne 0$.
The stabilizer of $(u_1,\langle u_2\rangle)$ consists of 
direct sums of a non-zero $1\times 1$ matrix and
a symplectic  $(l-2)\times (l-2)$ matrix. Such an element fixes $u_2$ as well if and only
if the first $1\times 1$ matrix is 1.

Now we are ready to consider the $L$-action on $\mathfrak u^-$. Maintain the above notation.
Since $V_2$ and $V_3$ are isomorphic as $GL_{l-1}$-modules, 
for each $w\in Z\subset V_1$ we can similarly consider the open subset 
$V_3\setminus (\Ker w)^\bot$. Denote it by $Y_w$. Define
the subsets $W_2=\cup_{w\in Z}(w\times X_w\times Y_w)$ and
$W=\{u\in W_2:u_2\mbox{ is not a multiple of }u_3\}$. Let us prove that $L$ acts on $W$
transitively.

We may suppose that $u_1$ and $u_2$ are canonical in the sense stated above.
Applying a diagonal matrix from $(GL_{l-1})_{u_1,\angs{u_2}}$, we may assume that
$(u_2)_1(u_3)_1=1$ since $V_2$ and $V_3$ are both tautological $GL_{l-1}$-modules. Since
$u_3$ is not a multiple of $u_2$, the vector $v=((u_3)_2, (u_3)_3, \ldots, (u_3)_{l-1})$ is
not zero. Since $Sp_{l-2}$ acts transitively on $\mathbb K^{l-2}\setminus 0$, there exists
an element $g\in (GL_{l-1})_{u_1, u_2}$, $g=g_1\oplus g_2$, $g_1=1$, $g_2\in Sp_{l-2}$
such that $g_2v=((u_3)_1, 0, \ldots, 0)^T$. In other words, we may suppose that
$u_1$ and $u_2$ are canonical, the only non-zero coordinates of $u_3$ are the first one
and the second one, they are equal, and $(u_2)_1(u_3)_1=1$.

Now recall that $L=GL_{l-1}\times \mathbb K^*$, the $\mathbb K^*$-weights of $V_1$, $V_2$ and $V_3$
are 0, 1 and $-1$, respectively. Therefore, after applying a suitable element of $\mathbb K^*$,
we have $u_1=S$, $u_2=(1, 0, \ldots, 0)^T$ and $u_3=(1, 1, 0, \ldots, 0)^T$, and the $G$-action
on $G/P$ is generically 3-transitive.

\subsubsection{$P=P_{1, l}$.} In this case, Proposition \ref{techprop} applies if and
only if $l$ is even.

\underline{Let $l$ be even.} It is sufficient to prove that there is an open $L$-orbit
on $\mathfrak u^-$.


Again $L=\mathbb K^*\times GL_{l-1}$, and the $L$-module $V$ can be decomposed into three
summands, $V=V_1\oplus V_2\oplus V_3$, but this time
$V_1$ is 
a $GL_{l-1}$-module $\Lambda^2\mathbb K^{l-1}$
and its $\mathbb K^*$-weight is 0,
$V_2$ is a $GL_{l-1}$-module $\mathbb K^{l-1}$ of weight 1,
$V_3$ is 
a $GL_{l-1}$-module $(\mathbb K^{l-1})^*$ 
of weight 1.
We denote components of an element $u\in \mathfrak u^-$ by $u_1, u_2, u_3$.

Recall the notation we have introduced for the $GL_{l-1}$-module $V'$. Also this time
denote $Y_w=V_3\setminus\Ker w$. Define $W_2=\cup_{w\in Z}(w\times X_w\times Y_w)$ and
$W=\{u\in W_2: \langle u_2, u_3\rangle\ne 0\}$. Here $\langle\cdot,\cdot\rangle$ denotes
the $GL_{l-1}$-invariant pairing between $V_2$ and $V_3$. Its $\mathbb K^*$-weight is 2, but
the condition $\langle u_2, u_3\rangle\ne 0$ is not affected by $\mathbb K^*$-action, so $W$
is $L$-invariant. We are going to prove that $L$ acts transitively on $W$.

Again we may suppose that $u_1=R$ and the only non-zero coordinate of $u_2$ is the
first one. Notice that $(u_3)_1\ne 0$ since $\langle u_2, u_3\rangle\ne 0$.
This time $V_2$ and $V_3$ are dual $GL_{l-1}$-modules, so by applying a suitable element of
$(GL_{l-1})_{u_1, \angs{u_2}}$ the coordinates $(u_2)_1$ and $(u_3)_1$ can be made equal.

Consider the vector $v=((u_3)_2, (u_3)_3, \ldots, (u_3)_{l-1})$. It cannot be zero since
$u_3\notin\Ker u_1$.
Since $Sp_{l-2}$ acts transitively on $(\mathbb K^{l-2})^*\setminus 0$, there exists
an element $g\in (GL_{l-1})_{u_1, u_2}$, $g=g_1\oplus g_2$, $g_1=1$, $g_2\in Sp_{l-2}$
such that $g_2v=((u_3)_1, 0, \ldots, 0)^T$. In other words, we may suppose that
$u_1$ and $u_2$ are canonical, the only non-zero coordinates of $u_3$ are the first one
and the second one, and $(u_2)_1=(u_3)_1=(u_3)_2$.

This time the $\mathbb K^*$-weights of $V_2$ and $V_3$ are both 1, so with the help of
the $\mathbb K^*$-action we can satisfy the equality $(u_2)_1=(u_3)_1=(u_3)_2=1$. Thus,
$L$ acts transitively on $W$, and the $G$-action on $G/P$ is generically 3-transitive.

\underline{Let $l$ be odd.} Proposition \ref{techprop} does not apply, and we have to
find $\gtd(G:G/P)$ directly.

Consider the tautological $SO_{2l}$-module $\mathbb K^{2l}$, and let $e_1, \ldots, e_{2l}$
be its standard basis.
Let $X'\subset \mathrm{Gr}(l, 2l)$ be the set of all isotropic subspaces of dimension $l$ in
$\mathbb K^{2l}$. One easily checks that $X'$ is a disjoint
union of two $SO_{2l}$-orbits, and the group $O_{2l}$ interchanges them. If two subspaces
belong to the same $SO_{2l}$-orbit, then their intersection is non-zero.

Denote the orbit $SO_{2l}\angs{e_1, \ldots, e_l}\subset X'$ by $X$. Then $X$ is an
irreducible subvariety in $\mathrm{Gr}(l, 2l)$.

For each $s\in X$ let $Y_s\subset\mathbf P^{2l-1}$ be the set of all lines contained in $s$.
Clearly, $W=\cup_{s\in X}(s\times Y_s)$ is a closed $G$-invariant subset in
$\mathrm{Gr} (l, 2l)\times\mathbf P^{2l-1}$. One easily checks that $G/P=W$.

Let us prove that there exists an open $G$-orbit on $W\times W\times W$. We impose
some conditions on the point $(s_1, a_1, s_2, a_2, s_3, a_3)\in W\times W\times W$ and so define
an open subset $Y\subseteq W\times W\times W$. Then we define a point $p\in \mathrm{Gr} (l, 2l)\times\mathbf P^{2l-1}
\times \mathrm{Gr} (l, 2l)\times\mathbf P^{2l-1}\times \mathrm{Gr} (l, 2l)\times\mathbf P^{2l-1}$ and
prove that (a) each point $y\in Y$
belongs to the same $G$-orbit that $p$ does, and (b) $p$ belongs to $Y$. 
Condition (b) guarantees that $Y$ is not empty.

Let $Y\subseteq W\times W\times W$ be the set of all tuples $(s_1, a_1, s_2, a_2, s_3, a_3)$
such that:
\begin{enumerate}
\item $s_1\cap s_2\cap s_3=0$.
\item $s_1+s_2+s_3=\mathbb K^{2l}$.
\item $\dim s_1\cap s_2=\dim s_2\cap s_3=\dim s_1\cap s_3=1$.
\item $\dim (a_1+a_2+a_3)=3$.
\item The intersection of the subspaces $s=(s_1\cap s_2)+(s_2\cap s_3)+(s_1\cap s_3)$ and
$a=a_1+a_2+a_3$ is zero.
\item $a_i+s_j+s_k=\mathbb K^{2l}$, where $i=1, 2, 3, j\ne i, k\ne i, j<k$.
\item The lines $a_i$ and $a_j$ are not orthogonal for all $i\ne j$.
\end{enumerate}
Notice that if conditions (1)--(3) hold, the sum of subspaces $s_1\cap s_2$, $s_2\cap s_3$
and $s_1\cap s_3$ is direct.

Let us prove that $G$ acts transitively on $Y$. Choose vectors $f_1, f_2, f_3$ such that
$\angs{f_i}=a_i$, and vectors $f_4, f_5, f_6$ such that $\angs{f_4}=s_2\cap s_3$,
$\angs{f_5}=s_1\cap s_3$, $\angs{f_6}=s_1\cap s_2$. The restriction of the bilinear
form to the subspace $S=\angs{f_1, \ldots, f_6}$ is defined by the following matrix:
$$
\left(
\begin{array}{cccccc}
0&b_1&b_2&b_4&&\\
b_1&0&b_3&&b_5&\\
b_2&b_3&0&&&b_6\\
b_4&&&&&\\
&b_5&&&&\\
&&b_6&&&
\end{array}
\right).
$$
Conditions (6) and (7) imply that $b_i\ne 0$  for all $i$. Clearly, this matrix is
non-degenerate.

The above choice of the vectors $f_i$ allows to multiply them by scalars. 
Up to scalar multiplication we may assume that all $b_i=1$.

Notice that a cyclic permutation of $f_1, f_2, f_3$ and the same permutation
of $f_4, f_5, f_6$ performed simultaneously define a linear operator on $S$ that
preserves the restriction of the bilinear form and whose determinant is 1.

Consider the following basis of $S$:
$g_1=f_1$,
$g_2=f_5$,
$g_3=f_6$,
$g_4=f_3-f_4-f_5$,
$g_5=f_2-f_4$,
$g_6=f_4$.
One checks directly that the matrix of the bilinear form with respect to this basis is $Q$.
Obviously, there exists a matrix $M$ such that $(f_1,\ldots, f_6)=(g_1,\ldots, g_6)M$
and whose elements do not depend on $a_i$ and $s_i$.

The restriction of the bilinear form to $S$ is non-degenerate, hence its restriction to
$S^\bot$ is also non-degenerate. Since $s_i=s_i^\bot$, $\dim (s_i\cap S^\bot)=l-3$ for all
$i$.

Thus, $S^\bot$ is a subspace of even dimension equipped with a non-degenerate symmetric bilinear
form. We have three isotropic subspaces of maximal dimension in $S^\bot$, and the intersection of
any two of them is zero. Let us prove the following lemma.

\begin{lemma}
Let $(\cdot,\cdot)$ be a non-degenerate symmetric bilinear form in $\mathbb K^{2k}$,
and $U_1, U_2, U_3$ be isotropic subspaces of dimension $k$ with $U_i\cap U_j=0$ for $i\ne j$.
Then there exist matrices $M_1, M_2, M_3\in Mat_{2k\times k}$ that do
not depend on $U_i$ and a basis $e_1, \ldots, e_{2k}$ of $\mathbb K^{2k}$ such that:
(a) the matrix of the bilinear form is $Q$ and (b) $(e_1, \ldots, e_{2k})M_i$ is a
basis of $U_i$.
\end{lemma}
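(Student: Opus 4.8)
The plan is to reduce the whole statement to the classification of nondegenerate alternating forms. First I would exploit $U_1$ and $U_2$. Since $U_1\cap U_2=0$ and $\dim U_1=\dim U_2=k$, we have $\mathbb K^{2k}=U_1\oplus U_2$. Because both summands are isotropic and the form is nondegenerate, the form restricts to a perfect pairing $U_1\times U_2\to\mathbb K$: a vector $u\in U_1$ orthogonal to all of $U_2$ would be orthogonal to $U_1\oplus U_2=\mathbb K^{2k}$, hence zero. I would therefore pick any basis $e_1,\ldots,e_k$ of $U_1$ and let $e_{k+1},\ldots,e_{2k}$ be the basis of $U_2$ dual to it under this pairing, ordered so that $(e_i,e_{2k+1-i})=1$. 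A direct check then shows that the Gram matrix of $e_1,\ldots,e_{2k}$ is exactly $Q$, and in this basis $U_1$ and $U_2$ are the spans of the first and of the last $k$ vectors; this already produces the fixed matrices $M_1=\binom{I_k}{0}$ and $M_2=\binom{0}{I_k}$, independent of the subspaces.

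Next I would bring in $U_3$. As $U_3\cap U_2=0$, the projection $\mathbb K^{2k}\to U_1$ along $U_2$ restricts to an isomorphism $U_3\xrightarrow{\sim}U_1$, so $U_3$ is the graph $\{u+\varphi(u):u\in U_1\}$ of a linear map $\varphi\colon U_1\to U_2$. Writing $\varphi$ as a matrix $A$ with respect to the chosen bases and using the perfect pairing to identify $U_2$ with $U_1^*$, the isotropy of $U_3$ translates into $A$ being skew-symmetric, while $U_3\cap U_1=0$ forces $\varphi$, hence $A$, to be invertible. In particular a nondegenerate alternating $k\times k$ matrix exists only for even $k$, so the hypotheses are vacuous unless $k$ is even; I may therefore assume $k$ is even, which is exactly the situation $k=l-3$, $l$ odd, in which the lemma is applied.

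Finally I would use the residual freedom. Having fixed $U_1,U_2$ and insisted that the Gram matrix stay equal to $Q$, the remaining choices of basis are governed by the stabilizer of the ordered pair $(U_1,U_2)$ inside the orthogonal group of $Q$; this stabilizer is a copy of $GL_k$, acting on the basis of $U_1$ by some $g$ and on the dual basis of $U_2$ by the contragredient, and its effect on $A$ is the congruence $A\mapsto g^{\mathsf T}Ag$. Since $A$ is a nondegenerate alternating form, I can choose $g\in GL_k$ with $g^{\mathsf T}Ag$ equal to the standard symplectic matrix $J$. After this change $U_3$ becomes the graph of the fixed form $J$, which is described by a matrix $M_3$ built from $I_k$ and $J$ alone and not depending on the original $U_i$; together with $M_1,M_2$ this yields the required basis and matrices.

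The main point to verify carefully is the third step: that the subgroup of the orthogonal group preserving both $Q$ and the ordered pair $(U_1,U_2)$ is exactly this $GL_k$ and that it acts on $A$ precisely by congruence, so that the normal-form theorem for alternating forms can be invoked while simultaneously keeping the Gram matrix fixed at $Q$. Everything else is bookkeeping with the chosen bases.
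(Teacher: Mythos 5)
Your proposal is correct and follows essentially the same route as the paper: both realize $U_3$ as the graph of an invertible map $U_1\to U_2$, observe that isotropy of $U_3$ makes the induced form $(v_1,v_2)\mapsto(v_1,Av_2)$ on $U_1$ a non-degenerate alternating form (so $k$ is even), and then normalize it to a standard symplectic matrix to obtain fixed matrices $M_1,M_2,M_3$. The only cosmetic difference is that the paper chooses a symplectic basis $q_1,\ldots,q_k$ of $U_1$ directly and sets $q_{k+j}=\pm Aq_j$ to make the Gram matrix equal to $Q$, whereas you first fix an arbitrary dual pair of bases and then act by the $GL_k$ stabilizer of $(U_1,U_2)$ --- these are the same normalization.
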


\begin{proof}
Consider the non-degenerate linear map $A\colon U_1\to U_2$ whose graph is the
subspace $U_3$. This is possible since $U_i\cap U_j\ne 0$ for $i\ne j$. In terms
of the map, $U_3=\{v+Av\mid v\in U_1\}$.

Consider the bilinear form $(v_1, v_2)_A=
(v_1, Av_2)$ on $U_1$. 
Since $U_3$ is isotropic, we have
$0=(v_1+Av_1, v_2+Av_2)=(v_1,v_2)+(Av_1+Av_2)+(v_1, Av_2)+(Av_1, v_2)=
(v_1,Av_2)+(v_2,Av_1)=(v_1,v_2)_A+(v_2,v_1)_A$ for all $v_1, v_2\in U_1$, 
hence the form $(\cdot, \cdot)_A$
is skew-symmetric. Assume that it is degenerate and $v\in U_1$ belongs to its kernel.
Then $(v_1,v)=(v_1,Av)=0$ for all $v_1\in U_1$. Since the pairing between
trivially intersecting isotropic subspaces $U_1$ and $U_2$ of maximal dimension
is non-degenerate, $Av=0$. Since $\Ker A=0$, $v=0$ and the form $(\cdot,\cdot)_A$
is non-degenerate.

Thus, we have a symplectic space $U_1$ with the skew-symmetric form $(\cdot,\cdot)_A$.
Hence $k$ is even. Choose a basis $\angs{q_1, \ldots, q_k}$ of $U_1$ such that the
matrix of the skew-symmetric form is
$$
\left(
\begin{array}{cccccc}
&&&&&1\\
&\largezero&&&\udots&\\
&&&1&&\\
&&-1&&&\\
&\udots&&&\largezero&\\
-1&&&&&
\end{array}
\right).
$$

The vectors $q_1, \ldots, q_k$ are linearly independent, so let them be the
first $k$ elements of a basis of $\mathbb K^{2l}$. Define the rest of the basis as follows:
$q_{k+j}=-Aq_j$ if $j=1, \ldots, k/2$ and $q_{k+j}=Aq_j$ if $j=k/2+1, \ldots, k$.
The matrix of the bilinear form $(\cdot, \cdot)$ is $Q$. The subspaces
$U_i$ have the following bases:
$$
\begin{aligned}
U_1&=\angs{q_1, \ldots, q_k}\\
U_2&=\angs{q_{k+1}, \ldots, q_{2k}}\\
U_3&=\langle q_1-q_{k+1}, \ldots, q_{k/2}-q_{k+k/2}, q_{k/2+1}+q_{k+k/2+1},\ldots, q_k+q_{2k}\rangle.\\
\end{aligned}
$$
This completes the proof of the lemma.
\end{proof}

Consider the following basis of $\mathbb K^{2l}$: $g_1, g_2, g_3, q_1, \ldots,
q_{2l-6}, g_4, g_5, g_6$, where $q_i$ are defined above in the proof of the lemma.
Notice that the matrix of the bilinear form in this basis is $Q$.
Define the operator $B\colon\mathbb K^{2l}\to\mathbb K^{2l}$ that maps this basis
to the standard one. We know that the matrix of the bilinear form is $Q$ in both bases,
so $B\in O_{2l}$. 

Let us check that $\det B=1$. Assume that $\det B=-1$.
Since $s_1=\angs{g_1, g_2, g_3, q_1, \ldots, q_{l-3}}$,
$Bs_1=\angs{e_1,\ldots, e_l}\in X$. Since $s_1\in X$, there exists an operator
$C\in SO_{2l}$ such that $CBs_1=s_1$. Thus, $CB\in (O_{2l})_{s_1}$, $\det CB=-1$,
and the $O_{2l}$-orbit $X'$ cannot be a union of two distinct $SO_{2l}$-orbits,
a contradiction.

Bases of the subspaces $Ba_i$ and $Bs_i$ can be written in terms of $e_i$ using
matrices that do not depend on $a_i$ and $s_i$. Namely, they are the same matrices
that we need to write bases of $a_i$ and $s_i$ using $g_i$ and $q_i$, and the latter
do not depend on $a_i$ and $s_i$.
Denote the 6-tuple $(Bs_1,Ba_1,Bs_2,Ba_2,Bs_3,Ba_3)$ by $p$.
It suffices to prove that 
$p\in Y$. Conditions (1)--(7) hold by the construction of
$g_i$ and $q_i$, but we should check that $(Bs_1, Bs_2, Bs_3)\in X\times X\times X$.
It is sufficient to find elements of $SO_{2l}$ that map $s_1$ to $s_2$ and $s_1$ to
$s_3$. Since $s_i=(s_i\cap S)\oplus (s_i\cap S^\bot)$, we find them as direct sums
of elements of $SO(S)$ and $SO(S^\bot)$. The elements of $SO(S)$ are already found,
they are cyclic permutations of $f_1, f_2, f_3$ and $f_4, f_5, f_6$. To interchange
$s_1\cap S^\bot$ and $s_2\cap S^\bot$, consider the map that permutes all the pairs of
vectors $g_i\leftrightarrow g_{2l+1-i}$, $i=1,\ldots l-3$. It is orthogonal
and its determinant is 1 since $l-3$ is even. Finally, the
operator with the following matrix in the basis $g_i$ maps $s_1\cap S^\bot$ to $s_3\cap S^\bot$.
$$
\left(
\begin{array}{cc}
I_{l-3}&0\\
D&I_{l-3},
\end{array}
\right)
$$
where
$$
D=\left(
\begin{array}{cccccc}
-1&&&&&\\
&\ddots&&&\largezero&\\
&&-1&&&\\
&&&1&&\\
&\largezero&&&\ddots&\\
&&&&&1
\end{array}
\right).
$$

Therefore, $SO_{2l}$ acts transitively on $Y$, and $\gtd(G:G/P)=3$.


\subsubsection{$P=P_{1,l-1,l}$} The subgroups $P$ and $P^-$ are conjugate for all $l$.
It is sufficient to find $\gtd(L:\mathfrak u^-)$, where $L=(\mathbb K^*)^2\times GL_{l-2}$
and the $L$-module $\mathfrak u^-$ is isomorphic to the direct sum of 7 simple modules
that we denote by $V_1, \ldots, V_7$. Namely,
$V_1$ is 
a $GL_{l-1}$-module $(\mathbb K^{l-2})^*$ 
and its $(\mathbb K^*)^2$-weight is $(1,0)$,
$V_2$ is a trivial $GL_{l-2}$-module of weight $(1,1)$,
$V_3$ is a $GL_{l-2}$-module $\mathbb K^{l-2}$ of weight $(0,1)$,
$V_4$ is a trivial $GL_{l-2}$-module of weight $(1,-1)$,
$V_5$ is a $GL_{l-2}$-module $\mathbb K^{l-2}$ of weight $(0,-1)$,
$V_6$ is a $GL_{l-2}$-module $\mathbb K^{l-2}$ of weight $(1,0)$,
$V_7$ is 
a $GL_{l-2}$-module $\Lambda^2\mathbb K^{l-2}$
of weight $(0,0)$.
Denote the components of $u\in\mathfrak u^-$ by $u_1, \ldots, u_7$.

There exists a $GL_{l-2}$-invariant pairing between $V_1$ and $V_3$ whose $(\mathbb K^*)^2$-weight is
$(1,1)$. The following function is a rational $L$-invariant:
$$
\frac{(u_1, u_3)}{u_4}.
$$

Thus, the $G$-action on $G/P$ is not generically 3-transitive.

\subsection{Groups of type $E_6$} The only parabolic subgroup to consider is $P=P_{1,6}$. The
set $\{1,6\}$ of Dynkin diagram vertices is invariant under all automorphisms of the
Dynkin diagram. Hence the Weyl group element of the maximal length interchanges $P$
and $P^-$. We have to find $\gtd (L:\mathfrak u^-)$.


The Levi subgroup $L$ is locally isomorphic to $(\mathbb K^*)^2\times SO_8$, and the $L$-module
$\mathfrak u^-$ is isomorphic to $V_1\oplus V_2\oplus V_3$. Here $V_1$ is an $SO_8$-module
with the lowest weight $-\pi_1$, i. e. a tautological $SO_8$-module, $V_2$ is an $SO_8$-module
with the lowest weight $-\pi_3$, $V_3$ is an $SO_8$-module with the lowest weight $-\pi_4$.
Denote the components of $u\in\mathfrak u^-$ by $u_1, u_2, u_3$.

Since $V_1$ is a tautological $SO_8$-module, there exists an $SO_8$-invariant symmetric
bilinear form on it that we denote by $(u_1, u_1)$. There exist diagram automorphisms
of $SO_8$ that transform the tautological $SO_8$-module to $SO_8$-modules isomorphic to
$V_2$ and $V_3$. So there exist an $SO_8$-invariant on $V_2$ that we denote by $(u_2, u_2)$
and an $SO_8$-invariant on $V_3$ that we denote by $(u_3, u_3)$. These bilinear forms
are not necessarily $(\mathbb K^*)^2$-invariant, in general their $(\mathbb K^*)^2$-weights
are three pairs of integers. There is a linear combination of these pairs that is equal
to zero. Hence, there exists a non-trivial rational $L$-invariant of the form
$$
(u_1, u_1)^a(u_2, u_2)^b(u_3, u_3)^c,
$$
where $a, b, c\in\mathbb Z$, and the $G$-action on $G/P\times G/P\times G/P$ is not
generically transitive.

\section{Finite number of orbits}

\begin{proposition}
Let $G$ be a simple algebraic group and $P$ be a proper parabolic subgroup. If
$n\ge 4$, the number of $G$-orbits on $(G/P)^n$ is infinite.
\end{proposition}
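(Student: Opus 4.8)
The plan is to reduce to the case of a maximal parabolic subgroup and to $n=4$, and then to produce infinitely many orbits using the cross-ratio of four collinear points.

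First I would pass to a maximal parabolic. Since $P$ is proper, $P=P_I$ with $I\ne\varnothing$; choosing $i\in I$ gives $P\subseteq P_i$ and a surjective $G$-equivariant map $\varphi\colon (G/P)^n\to (G/P_i)^n$. The image of a single $G$-orbit under a $G$-equivariant map is again a single $G$-orbit, so if $(G/P)^n$ had finitely many $G$-orbits, their images would cover $(G/P_i)^n$ by finitely many orbits. Hence it suffices to show that $(G/P_i)^n$ has infinitely many $G$-orbits. Applying the same argument to the projection onto the first four factors $(G/P_i)^n\to (G/P_i)^4$, it suffices to treat $n=4$, and the case $n\ge 4$ follows.

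Next I would exhibit a line in $G/P_i$. Embed $G/P_i$ into $\mathbf P(V(\pi_i)^*)$ via the ample generator $\mathcal L_{\pi_i}$ of its (rank-one) Picard group; on this embedding $G$ acts linearly, hence by automorphisms carrying $1$-dimensional linear sections to $1$-dimensional linear sections. The Schubert curve $X=\overline{Bs_iP_i/P_i}$ is isomorphic to $\mathbf P^1$, and its degree with respect to $\mathcal L_{\pi_i}$ equals $\langle\pi_i,\alpha_i^\vee\rangle=1$, so $X$ is a line. Since $G$ acts linearly on the ambient projective space, for every $g\in G$ the translate $gX$ is again a line and the restriction $g|_X\colon X\to gX$ is a projective isomorphism of $\mathbf P^1$'s; in particular it preserves the cross-ratio of any four points.

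Now fix three distinct points $p_1,p_2,p_3\in X$ and let $p_4$ range over $X$, so that the cross-ratio $[p_1,p_2;p_3,p_4]$ takes infinitely many values. If two of the resulting quadruples $(p_1,p_2,p_3,p_4)$ and $(p_1,p_2,p_3,p_4')$ lay in the same $G$-orbit, then some $g\in G$ would carry $X$ to the line $gX$ containing all four image points while preserving the cross-ratio along $X\to gX$, forcing the two cross-ratios to coincide. Therefore these quadruples lie in pairwise distinct $G$-orbits, and $(G/P_i)^4$ — hence $(G/P)^n$ for every $n\ge4$ — has infinitely many $G$-orbits. The step requiring the most care is the justification that $G$ sends lines to lines by cross-ratio-preserving isomorphisms; this rests precisely on $G/P_i$ having Picard rank one (so that ``line'' is an intrinsic, $G$-invariant notion) together with the $G$-linearity of the minimal embedding, and on the degree computation $\langle\pi_i,\alpha_i^\vee\rangle=1$ identifying $X$ as a line.
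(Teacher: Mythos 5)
Your proof is correct and follows essentially the same route as the paper: both arguments exhibit a projective line inside the multiple flag variety's factor under a $G$-linearized projective embedding and use the $G$-invariance of the cross-ratio of four points on that line to produce infinitely many orbits. The only cosmetic differences are that the paper works directly with $G/P$ embedded via $\lambda=\pi_{i_1}+\cdots+\pi_{i_s}$, obtaining the line as the closure of $\exp(ty_{i_1})v_\lambda$ from the computation $y_{i_1}^2v_\lambda=0$ (so your preliminary reduction to a maximal parabolic is unnecessary, though harmless), whereas you identify the line as the Schubert curve via Chevalley's degree formula $\langle\pi_i,\alpha_i^\vee\rangle=1$.
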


\begin{proof}
Let $P=P_{i_1,\ldots,i_s}$. Consider the dominant weight $\lambda=\pi_{i_1}+\ldots+\pi_{i_s}$.
Then $G/P$ is isomorphic to the projectivization of the orbit of the highest weight
vector $v_\lambda\in V(\lambda)$. In the sequel we shortly write $i=i_1$. It is easy to
check that $y_i^2v_\lambda=0$. Denote the unipotent subgroup $\exp(ty_i)$ by $U_i$.
We see that $U_iv_\lambda$ is an affine line not containing zero. The closure of
its image in the projectivization $\mathbf P(V(\lambda))$ is a projective line
$\mathbf P^1\subseteq G/P\subseteq\mathbf P(V(\lambda))$. Choose $n\ge 4$ points
$(x_1,\ldots, x_n)\in\mathbf P^1\times\ldots\times\mathbf P^1\subseteq G/P\times\ldots\times
G/P$. The double ratio of the first four of these points does not change under
$G$-action. Hence, two $n$-tuples with different double ratios cannot belong to the
same orbit, and the number of orbits is infinite.
\end{proof}

Now we prove that in the cases $P=P_{1,l}$ and $P=P_{l-1, l}$ the number of orbits on 
$G/P\times G/P\times G/P$ is infinite.

We suppose that $G=SO_{2l}$. Let $\mathbb K^{2l}$ be the tautological $SO_{2l}$-module and let
$e_1, \ldots, e_{2l}$ be the standard basis.
Let $X'\subset \mathrm{Gr}(l, 2l)$ be the set of all isotropic
subspaces of dimension $l$ in $\mathbb K^{2l}$. It is known that $G/P_l$ is isomorphic to a
connected component of $X'$. In the sequel we suppose that $G/P_l=X\subseteq X'$.
For each $s\in X$ let $Y_s\subset\mathbf P^{2l-1}$ be the set of all lines contained in $s$.
One easily checks that the closed subset $Y=\cup_{s\in X}(s\times Y_s)\subset
\mathrm{Gr} (l, 2l)\times\mathbf P^{2l-1}$ is isomorphic to $SO_{2l}/P_{1,l}$.

Similarly, if $s\in X$, denote by $Z_s\subset \mathrm{Gr}(l-1,2l)$ the set of all 
subspaces of dimension $l-1$ in $s$. Let $Z$ be the closed subset
$\cup_{s\in X}(s\times Z_s)\subset \mathrm{Gr} (l, 2l)\times \mathrm{Gr} (l-1, 2l)$.
One easily checks that it is isomorphic to $SO_{2l}/P_{l-1,l}$.

First, let $l=3$. Consider the following isotropic
subspaces: $S_1=\angs{e_1, e_2, e_4}$, $S_2=\angs{e_2, e_3, e_6}$, $S_3=\angs{e_1, e_3, e_5}$.
They belong to the same $SO_6$-orbit, so we may suppose that $S_1, S_2, S_3\in X$.
Choose a line $T_1\subset S_1$ such that $T_1\subset\angs{e_1,e_2}$. Also
choose lines $T_2\subset S_2$ and $T_3\subset S_3$ such that
$T_2\subset\angs{e_2,e_3}$ and $T_3\subset\angs{e_1, e_3}$. Impose one more restriction,
namely, the sum $T_2+T_3$ should be direct and should not be equal to $\angs{e_1, e_2}$. Consider the
point $((S_1, T_1),(S_2, T_2),(S_3, T_3))\in G/P_{1,l}\times G/P_{1,l}\times G/P_{1,l}$.
There are four subspaces of $\angs{e_1,e_2}$: $\angs{e_1}=S_1\cap S_3$,
$\angs{e_2}=S_2\cap S_3$, $T_1$ and $T_4=(T_2\oplus T_3)\cap\angs{e_1, e_2}$.
Thus, we have defined four lines in $\mathbb K^6$ in terms of intersections 
and sums of $S_i$ and $T_i$. If we apply an element $g\in G$ to these four lines, 
we will obtain four lines defined in the same way using $gS_i$ and $gT_i$ 
instead of $S_i$ and $T_i$. 
The double ratio of these four lines in their sum of
dimension two is not changed under $G$-action. Since $T_1$ is chosen arbitrarily,
this double ratio can be any number and the number of orbits is infinite.

Consider the same subspaces $S_i$ and $T_i$ and set $U_1=T_1\oplus\angs{e_4}$,
$U_2=T_2\oplus\angs{e_6}$ and $U_3=T_3\oplus\angs{e_5}$. 
The point
$((S_1, U_1),(S_2, U_2),(S_3, U_3))$ belongs to $Z\times Z\times Z$.
Note that $\angs{e_1, e_2, e_3}=(S_1\cap S_2)\oplus (S_2\cap S_3)\oplus (S_1\cap S_3)$
and $T_i=U_i\cap\angs{e_1, e_2, e_3}$. 
Again we have 
a subspace of dimension two and four lines
in it defined in terms of intersections and sums of $S_i$ and $U_i$. 
The existence of $SO_6$-invariant double ratio in this case
yields that the number of orbits is infinite.

Let $l>3$. Construct the subspaces $S_i$, $T_i$ and $U_i$ as above, using the last three
basis vectors instead of $e_4, e_5, e_6$. Let $S_i'=S_i\oplus\angs{e_4,\ldots,e_l}$ and
$U_i'=U_i\oplus\angs{e_4,\ldots,e_l}$. 
The points
$((S_1',T_1),(S_2',T_2),(S_3',T_3))$
and
$((S_1',U_1'),(S_2',U_2'),(S_3',U_3'))$
belong to $Y\times Y\times Y$ and $Z\times Z\times Z$, respectively.
Consider also the subspace $V=(S_1'\cap S_2'\cap S_3')^\bot$. The restriction of the
bilinear form to this subspace is degenerate, its kernel is $S_1'\cap S_2'\cap S_3'=
\angs{e_4, \ldots, e_l}$. The quotient is a space of dimension 6 with a bilinear form.
The quotient morphism restricted to $\angs{e_1,e_2,e_3,e_{2l-2},e_{2l-1},e_{2l}}$ is
an isomorphism, so we have subspaces $S_i$, $T_i$, $U_i$ in the 6-dimensional space. This
is exactly the same situation as we had above for the group $SO_6$, and it
enables us to define double ratios for the points of $G/P_{1,l}\times G/P_{1,l}\times G/P_{1,l}$
and $G/P_{l-1,l}\times G/P_{l-1,l}\times G/P_{l-1,l}$ under consideration.
Therefore the number of $SO_{2l}$-orbits on these multiple flag varieties is infinite.
This finishes the proof of Theorem \ref{mysecondaryresult}.

\end{document}